\DeclareMathOperator{\Res}{Res}
\def\M{\mathcal M}
\def\O{\mathcal O}
\def\ChD{\operatorname{Ch}}
\def\Ch{\operatorname{Ch}}
\def\ch{\operatorname{ch}}
\def\Ch_K{\operatorname{Ch_K}}
\def \qCh_K{\operatorname{qCh_K}}
\def \Td_K{\operatorname{Td_K}}
\def \Td{\operatorname{Td}}
\def\td{\operatorname{td}}
\def\ch{\operatorname{ch}}
\def\sdim{\operatorname{sdim}}
\def\str{\operatorname{str}}
\def\tr{\operatorname{tr}}
\def\id{\operatorname{id}}
\def\QQ{\mathbb Q}
\title{Multiplicative Quantum Cobordism Theory}
\author{Irit Huq-Kuruvilla}
\begin{document}
\maketitle

\theoremstyle{plain}
\newtheorem{thm}{Theorem}[section]
\newtheorem{lem}[thm]{Lemma}
\newtheorem{prop}[thm]{Proposition}
\newtheorem{cor}[thm]{Corollary}
\newtheorem{exercise}[thm]{Exercise}

\theoremstyle{definition}
\newtheorem{mydef}[thm]{Definition}
\newtheorem{conj}[thm]{Conjecture}
\newtheorem{exmp}[thm]{Example}
\newtheorem*{claim}{Claim}
\newtheorem*{notation}{Notation}
\newtheorem{prob}{Problem}
\newtheorem{ex}{Exercise}

\theoremstyle{remark}
\newtheorem*{rem}{Remark}
\newtheorem*{note}{Note}

\newcommand{\Z}{\mathbb{Z}}
\newcommand{\Q}{\mathbb{Q}}
\newcommand{\R}{\mathbb{R}}
\newcommand{\C}{\mathbb{C}}
\newcommand{\bb}{\mathbb}
\newcommand{\cali}{\mathcal}
\newcommand{\oo}{\omega}
\newcommand{\p}{\partial}
\newcommand{\Wk}{W^{k,p}(U)}
\newcommand{\W}{W^{1,p}(U)}
\newcommand{\g}{\mathfrak{g}}
\newcommand{\fk}{\mathfrak}
\newcommand{\h}{\mathfrak{h}}
\newcommand{\scr}

\begin{abstract}
We prove a twisting theorem for nodal classes in permutation-equivariant quantum $K$-theory, and combine it with existing theorems of Givental \cite{PermXI} to obtain a twisting a theorem for general characteristic classes of the virtual tangent bundle. Using this result, we develop complex cobordism-valued Gromov-Witten invariants defined via $K$-theory, and relate those invariants to $K$-theoretic ones via the quantization of suitable symplectic transformations. This procedure is a $K$-theoretic analogue of the quantum cobordism theory developed by Givental and Coates in \cite{QuantCob}. Using the universality of cobordism theory, we give an example of these results in the context of ``Hirzebruch $K$-theory", which is the cohomology theory determined by the Hirzebruch $\chi_{-y}$-genus.
\end{abstract}
\section{Introduction}
In the complex cobordism theory $\text{MU}^*(\cdot)$,
the Hirzebruch--Riemann--Roch formula
\[ \pi_*(A) = \int_{\textstyle [M]\cap \Td(T_{M})}\ChD(A)\ \in \ \QQ\otimes \text{MU}^*(pt) \]
expresses in cohomological terms
the push-forward along the map $\pi: M \to pt$ to the point
of a complex cobordism class $A\in \text{MU}^*(M)$ in a given (stably almost) complex manifold $M$. In this formula, $\ChD$ is the  
Chern-Dold character $\text{MU}^*(\cdot ) \to H^{\bullet}(\cdot; \QQ\otimes \text{MU}^*(pt))$, which is an isomorphism over $\QQ$,  while the \say{abstract Todd class}
\[ \Td(\cdot) = e^{ \sum_{k>0}\textstyle \ch_k(\cdot)} \]
is the universal multiplicative characteristic class of complex vector bundles, where the arbitrary coefficients $s_1,s_2,\dots$ form a certain set of free polynomial generators in the ring $\QQ\otimes \text{MU}^*(pt)$.
Consequently, one can interpret the cap-product $[M]\cap e^{\sum_{k>0} s_k \ch_k (T_{M})}$ as the cobordism-valued fundamental class of $M$. Henceforth we adopt the convention of \cite{Coatesthesis} and denote the rational version of cobordism theory by $U^*$. 

If instead of a manifold, we begin with a complex orbifold $\M$ with (virtual) tangent bundle $T\M$, the right hand side of the equation still makes sense, and can be used to define cobordism theoretic intersections. This leads to what is now known as {\em fake} cobordism-valued intersection theory. This point of view was adopted by Coates and Givental    
  \cite{QuantCob} in developing the theory of (albeit ``fake'' in our current terminology) cobordism-valued Gromov--Witten invariants, and expressing them in terms of cohomological ones (see \cite{Coatesthesis}, \cite{TwistedOrb}). In that theory, $[\M]$ is the {\em virtual} fundamental class of a moduli space $\M$ of stable maps to a given K\"ahler target space $X$.

 Complex cobordism theory reduces to complex K-theory when the abstract Todd class is specialized to its classical incarnation
 $\td(\cdot) :=\prod_{\text{Chern roots}\ x_i} x_i/(1-e^{-x_i})$. Applied to a holomorphic orbibundle $V$ on a complex orbifold $\M$, this leads to the fake holomorphic Euler characteristic $\chi^{fake}(\M;V):=\int_{M} \ch(V)\td(T_{\M})$.
 It is a rational number, which is only one summand (corresponding to $h=\id$ in the
 orbifold's isotropy groups) on the R.H.S. of the Kawasaki--Riemann--Roch formula
 \[ \chi(\M;V)=\chi^{fake}\left(I\M; \frac{\tr_h V|_{\M^h}}{\str_h \wedge^{\bullet}N^*_{\M^h}}\right) .\]
   The latter expresses the {\em true} (and integer)
   holomorphic Euler characteristic $\chi(\M;V):=\sdim H^{\bullet}(\M;V)$ of the orbibundle in cohomological terms of the inertia orbifold $I\M$. \par
   
   Using this as a starting point, one can define true quantum K-theory (as opposed to the fake one), i.e. the theory of holomorphic Euler characteristics of holomorphic orbibundles on the moduli spaces of stable maps. It is based on the notion of the virtual structure sheaf introduced by Y.-P. Lee \cite{yplee}.\par 
    
For manifolds, there is a similar relation between cobordism theory and $K$-theory, analogous to the one between cobordism theory and cohomology, it works as follows: Given a compact complex manifold  $M$,
to every integer polynomial $P$ in $\dim M$ variables one can associate 
the $P$-twisted (virtual) structure sheaf
\[  \O_P:=\O \otimes P(\wedge^1T_{M},\wedge^2T_{M},...,\wedge^{\dim M}T_{M}),\] 
and respectively define true holomorphic Euler characteristics \[ \chi_P(M;V):=\chi(M;V\otimes O^{vir}_P)\]
of vector bundles $V\in K^0(M)$.  
In a similar manner to the cohomological case, these integers can be interpreted as cobordism-valued intersection
numbers.  \par
Indeed, taking in the role of $P$ the Newton polynomials $N_r$ expressed as polynomials of elementary symmetric functions, we obtain the Adams
operations, $\Psi^r(T_{\M})$. Over the rationals, the general multiplicative K-valued characteristic class of complex vector bundles has the form
$e^{\sum_{r>0} S_r \Psi^r(\cdot)/r}$, where the arbitrary coefficients $S_1,S_2,\dots$ can be considered as certain independent elements in a completion of the coefficient ring
of cobordism theory.\par

The analogue of the Chern-Dold character is denoted $\Ch_K$, which is an isomorphism:
\[ \Ch_K: U^{*}(M) \stackrel{\ChD}{\longrightarrow} H{ev}(M;U^*(pt)) \stackrel{\ch^{-1}}{\longrightarrow} K^0(M)\otimes U^*(pt)\]
 Thus, we can define
\[ U^*(M)\ni A \mapsto \pi_*(A):=\chi \left(M; \Ch_K(A)\otimes 
e^{\sum_{r>0}S_r\Psi^r(T_{M})/r}\right).\] 

The right hand side of this formalism makes sense in the context of orbifold $K$-theory as well. So we can use $K$-theory to emulate a version of cobordism-theoretic intersection theory for an orbifold, by using the ring $K^0(\M)\otimes^{hat}U^*(pt)$, with pushforward given by $\pi_*\alpha=\chi(X;\alpha\otimes e^{\sum_{r>0} S_r\Psi^r(T_{\M}/r)}$.   \par

We call  the brand of cobordism
theory thus obtained {\em multiplicative}, and refer to $Ch_K$ and $Td_K$ as the \emph{multiplicative Chern-Dold character} and \emph{multiplicative Todd class} respectively. It is not genuinely \say{true} cobordism-valued intersection theory on $\M$, as we are unclear what that should mean, but being defined by means of the true (as opposed to fake) K-theory on $\M$, it is \say{less fake} than that of
Coates--Givental.\par

\begin{rem}
The name \say{multiplicative} comes from the relationship between $Ch_K$ and the formal group law determined by $K$-theory, which is that of the multiplicative group. This relationship will be explained in further detail in Section 4.1.
\end{rem}

In what follows, we develop multiplicative quantum cobordism theory\footnote{We are as yet unsure of the relationship between the quantum deformation of cobordism-theory obtained by incorporating our invariants into the product and the deformation recently introduced by Buchstaber-Veselov in \cite{BuchVes}.}  (i.e. apply this construction to the moduli spaces of stable maps), and express thus defined cobordism-valued Gromov-Witten invariants in terms of $K$-theoretic ones.

The latter task reduces to computing K-theoretic Gromov-Witten invariants based on the twisted structure sheaves
$\O^{vir}_P$ in terms of those with $P=1$. \par For this, we need three types
of \say{twisting} results of quantum K-theory. Two of them are already contained
in \cite{PermXI}, and the third one is proved in Section 7 below. The proofs of these theorems rely on modifying the quantum adelic Hirzebruch-Riemann-Roch formula due to Givental \cite{PermIX}. This formula relies on the more general framework of permutation-equivariant quantum $K$-theory, and expresses true $K$-theoretic invariants in terms of fake ones. \par

The universal nature of cobordism theory means that the results of this paper can be specialized to other cohomology theories, as a particular example we consider \say{Hirzebruch $K$-theory}, the theory whose pushforward map is based on the Hirzebruch $\chi_{-y}$ genus. \par

\section{Permutation-Equivariant \texorpdfstring{$K$}{K}-theoretic invariants}
We recall the definition of permutation-equivariant $K$-theoretic Gromov-Witten invariants and the associated potentials, using the definition introduced in \cite{PermIX}.\par
Given $h\in S_n$ with $\ell_r(h)$ cycles of length $r$, with $r$ ranging from 1 to $s$, $h$ acts on $X_{g,n,d}$ by permuting the marked points. \par
For each $r$, given inputs $w_{r1},\dots,w_{r\ell_r}$ each of the form $\sum \phi_mq^m$, for $\phi_m\in K^0(X)\otimes \Lambda$, associate to the input $w_{rk}$ the element $W_{rk}\in K^0(X_{g,n,d}):=\prod_{\alpha=1}^r\sum_m \text{ev}_{\sigma_\alpha}^*\phi_mL_{\sigma_\alpha}^m$, where $\sigma_\alpha$ are the marked points permuted by the $k$th cycle of length $r$, and $L_{\sigma_\alpha}$ are the corresponding cotangent line bundles on $X_{g,n,d}$\par

Given a partition $\ell$, a genus $g$, and a degree $d$, $S_n$-equivariant correlators are defined as follows $$\langle w_{11},\dots,w_{1\ell_1},\dots\rangle_{g,\ell,d}:=\prod_r r^{-\ell_r}str_hH^*(X_{g,n,d},\mathcal{O}^{vir}_{g,n,d}\prod_{i=1}^s\otimes_{j=1}^{\ell_i} W_{ij}).$$\par

The elements of $\Lambda$ act $\Psi-$linearly, i.e. scaling the $r$th input by $s\in \Lambda$ is equivalent to multiplication by $\Psi^r(s)$. \par

Define the genus $g$ potential function $\mathcal{F}^g_X$ and total descendant potential $\mathcal{D}_X$ are defined as follows:
$$\mathcal{F}^g_X:=\sum_dQ^d\sum_{\ell}\frac{1}{\prod_r\ell_r!}\langle\dots,t_i,\dots,\rangle_{g,\ell,d}.$$
$$\mathcal{D}_X:=e^{\sum_{g\geq 0}(
\sum_{r>0 } \hbar^{k(g-1)}\frac{\Psi^r}{k}(R_r(F_g)))}.$$
The variables $t_r$ are the same for all inputs coming from cycles of length $r$, the operator $R_r$ takes $F(t_1,t_2,\dots)$ to $F(t_r,t_{2r},\dots)$, and $\Psi^r(\hbar)=\hbar^r$.\par
After a dilaton shift of $1-q$ in each input, $\mathcal{D}_X$ defines a quantum state in the symplectic loop space $\mathcal{K}^\infty$, which is given as a $K$-module by $\prod_{r\in \Z_+} \mathcal{K}$, equipped with the symplectic form $\Omega^\infty(f,g)=\bigoplus \frac{\Psi^r}{r}\Omega(f_r,g_r)$.\par
The positive and negative spaces $\mathcal{K}^\infty_+$ and $\mathcal{K}^\infty_-$ are inherited from $\mathcal{K}$.\par

The ordinary genus-$g$ and descendant potentials $\mathcal{F}^{g,K}_X$ and $\mathcal{D}_X^K$ discussed in the introduction are recovered from the $S_n$-equivariant ones by letting $t_r=0$ for $r>1$. Concretely this means the following:

Ordinary $K$-theoretic correlators draw inputs from the algebra $K[q^\pm]$, and are given by the formula 
$$\langle \alpha_1,\dots,\alpha_n\rangle_{g,n,d}=\chi(X_{g,n,d}; \mathcal{O}^{vir}\cdot \prod_{i=1}^n ev_i^*\alpha_i(L_i)).$$

The genus-$g$ and total descendant potentials are defined by
$$\mathcal{F}^{g,K}_X(\mathbf{t})=\sum_{d\in H^2(X),n\geq 0} \frac{Q^d}{n}\langle \mathbf{t},\mathbf{t},\dots,\mathbf{t}\rangle_{g,n,d}$$
$$\mathcal{D}_X^K=e^{\sum_g \hbar^{g-1}\mathcal{F}^{g,K}_X}.$$

\subsection{Symplectic Loop Spaces and Quantization}

We now introduce the vector spaces that will appear in our formalism: $\mathcal{K_r}$ consists of rational functions in $q$ with coefficients in $K$ and poles only at $0,\infty$ and roots of unity. The symplectic form is $$\Omega^r(f,g):=-\text{Res}_{0,\infty} (f(q),g(q^{-1}))^{(r)}\frac{dq}{q}.$$ Here $(,)^{(r)}$ denotes the $K-$theoretic Poincare pairing twisted by the operation $\Psi^(r)$.\par 

The polarization is described as follows: $\mathcal{K^r}_+$ consists of Laurent polynomials in $q$ and represents inputs to $\mathcal{D}_X$ comign from cycles of length $r$, and $\mathcal{K}_-$ is $\{ f: f(0)\neq \infty, f(\infty)=0 \}$. We also modify the quantization formulas by replacing $\hbar$ with $\hbar^r$.\par

Define $$\mathcal{K}^\infty:=\prod_{r>0}^\infty \mathcal{K}^r$$, with symplectic form $\Omega(f,g)=\sum_r \Omega^r(f_r,g_r)$, and polarizations inherited from $\mathcal{K}^r$. 

$\mathcal{D}_X$ is a function on $\mathcal{K}^\infty_+$, and thus defines a quantum state $\langle \mathcal{D}_X\rangle$. To make certain equations homogenous, we impose that this construction is done after applying the \emph{dilaton shift}, a translation replacing $t_r$ with $t_r-(1-q)$.\par

Since specializing to ordinary $K$-theoretic invariants is equivalent to setting $t_r=0$ for $r>1$, $\mathcal{D}_X^K$ naturally defines a quantum state on $\mathcal{K}:=\mathcal{K}^1$. 

\section{Twisting theorems}
We can define twisted $K$-theoretic invariants by tensoring $\mathcal{O}^{vir}$ with other classes from $K^0(X_{g,n,d})$. 

These classes take the form $\mathbf{E}=e^{\sum \frac{\Psi^k}{k}(E_k)}$. Where the $E_k$s must all be of the following 3 types:
\begin{itemize}
    \item Type I: $E_k=\text{ft}_*\text{ev}_{n+1}^*V_k$, where $V_k\in K^0(X)$. 
    \item Type II:$E_k=\text{ft}_*\text{ev}_{n+1}^*F_k(L_{n+1})$, where $L_{n+1}$ is the universal cotangent line and $F_k$ is a Laurent polynomial with coefficients in $K^0(X)$, with $F_k(1)=0$. If $F_k=\sum a_kq^k$ then $\text{ev}_{n+1}^*F_k(L_{n+1})$ is shorthand for $\sum \sum_k \text{ev}_{n+1}^*a_k L_{n+1}$, these are $K$-theoretic versions of the $\kappa$-classes introduced by Kabanov and Kimura in \cite{kabanov}.
    \item Type III: $E_k=\text{ft}_*\text{i}_*\text{ev}_{n+1}^*F_k(L_+,L_-)$, where $\text{i}:\mathcal{Z}\to U_{g,n,d}$ is the inclusion of the codimension-2 locus of nodes, and $F_k$ is a symmetric Laurent polynomial in two variables with coefficients as above. $L_+$ and $L_-$ denote cotangent line bundles to the branches at the node. Note we could equivalently write $E_k=\text{ft}_*(ev_{n+1}^*F_k(L_+,L_-)\otimes \mathcal{\mathcal{O}_Z})$. 
\end{itemize}

Henceforth we will omit the subscript $n+1$ from the evaluation map when there is no ambiguity. 

The construction of the twisted total descendant potential $\mathcal{D}_X^{E}$ is identical to its untwisted counterpart. except that the operator $R_r$ also replaces $E_k$ with $E_{rk}$.  

The effect of twistings of any type on on' $\mathcal{D}_X$ is as follows: $\langle \mathcal{D}_X^{\mathbf{E}}\rangle=\nabla\langle \mathcal{D}_X\rangle$, where $\nabla$ is some operator on Fock space. The quantization formulas are also adjusted slightly, be replacing $\hbar$ with $\hbar^r$ in the $r$th component. Twistings can also be taken on top of each other, if $\mathbf{E}$ splits as $\mathbf{E_1}\mathbf{E_0}$, where $\mathbf{E_1}$ is a twisting of a particular type, and $\mathbf{E_0}$ is made up of other types, we also have $\langle \mathcal{D}_X^{\mathbf{E}}\rangle=\nabla\langle \mathcal{D}^{\mathbf{E_0}}_X\rangle$. 

The following theorems, proven in \cite{PermXI}, describes $\nabla$ for twistings of type 1 and 2 in terms of the symplectic geometry of $\mathcal{K}^\infty$ \par

\begin{thm}
For a twisting of type I, $\nabla$ is the quantization of the multiplication operator $f_r\mapsto \Phi_rf_r$ in the $r$th component, where $\Phi_r$ is the Euler-Maclaurin asymptotics of $e^{\sum_{k\neq 0} \frac{\frac{\Psi^{k}}{k}(E^{rk})}{1-q^k}}$. Here $\Phi$ is regarded as a $\Psi$-linear  symplectomorphism from $\mathcal{K}^\infty$ with symplectic form governed by the twisted Poincare pairing $(a,b)^{tw,r}=\chi(a\otimes b\otimes e^{\sum \frac{\Psi^{rk}}{k}(E^{rk})})$ to $\mathcal{K}^\infty$ with the standard symplectic form.
\end{thm}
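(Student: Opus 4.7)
The plan is to compute the effect of the Type I twisting on correlators directly, identify an insertion that assembles into multiplication by $\Phi_r$, and then recognize this multiplication as a $\Psi$-linear symplectomorphism whose quantization produces $\nabla$. First I would unfold $\mathbf{E} = e^{\sum \Psi^k/k(E_k)}$ with $E_k = \text{ft}_* \text{ev}_{n+1}^* V_k$ and apply the projection formula along the universal curve $\text{ft}: X_{g,n+1,d} \to X_{g,n,d}$, stratum by stratum: the smooth unmarked locus pulls $V_k$ back from $X$ and produces a multiplicative twist of the virtual Poincar\'e pairing, while each marked section $\sigma_i$ restricts $\text{ev}_{n+1}^* V_k$ to $\text{ev}_i^* V_k$ and produces cotangent-line contributions via the push-forward of $\sigma_{i*} \mathcal{O}$. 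The nodal locus does not enter at the level of a Type I twisting.

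Assembling the Adams-power contributions at each marked point via the geometric identity $\sum_{n \geq 0} q^{nk} = 1/(1-q^k)$, the insertion at the $i$-th marked point exponentiates to $\Phi(L_i)$, the Euler--Maclaurin asymptotic of $\exp\bigl(\sum_{k \neq 0} \Psi^k(V_k)/k(1-q^k)\bigr)$. Next I would invoke the $R_r$-operator underlying the permutation-equivariant potential, which replaces $E_k$ by $E_{rk}$ on inputs coming from cycles of length $r$, to promote this multiplier to $\Phi_r$ on the $r$-th factor $\mathcal{K}^r$ of $\mathcal{K}^\infty$. A direct symmetry check then shows that $\Phi_r$, viewed as multiplication on $\mathcal{K}^r$, intertwines the twisted form $\Omega^{r,\text{tw}}$ built from $(\cdot,\cdot)^{\text{tw},r}$ with the standard form $\Omega^r$; the required identity $\Phi_r(q) \Phi_r(q^{-1}) = $ (pairing twist factor) is immediate from the $k \leftrightarrow -k$ symmetry of the defining sum. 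The operator $\nabla$ is then by construction the Givental--Coates quantization $\widehat{\Phi_r}$ of this symplectomorphism acting on the $r$-th component of Fock space, with $\hbar$ replaced by $\hbar^r$ as required.

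The main obstacle will be making the Euler--Maclaurin step fully rigorous. The sum $\sum_{k \neq 0} \Psi^k(V_k)/k(1-q^k)$ is divergent as written, and its interpretation depends on the polarization and on expansions near roots of unity; handling this requires the adelic framework underlying Givental's quantum Hirzebruch--Riemann--Roch formula, where each root of unity contributes a local factor whose product recovers $\Phi_r$ as a genuine element of the appropriate loop space. Verifying the compatibility of these local expansions with the push-forward calculation sketched above, and matching the $q$-expansion emerging from the marked-section contributions to the Euler--Maclaurin asymptotic on the nose, is where the bulk of the technical work lies, and is essentially what the permutation-equivariant machinery of \cite{PermXI} is designed to package cleanly.
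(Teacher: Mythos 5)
First, a point of reference: the paper does not actually prove this statement. Theorems 3.1 and 3.2 are imported verbatim from \cite{PermXI}; the only twisting theorem proved here is the Type III analogue (Theorem 3.3, Section 7), whose proof runs through the adelic formula and the Lefschetz--Kawasaki--Riemann--Roch theorem. That is also the route of the genuine proof of the Type I statement, so your closing admission that the adelic machinery is ``where the bulk of the technical work lies'' is accurate --- but it means your first two paragraphs are a heuristic for the shape of the answer rather than a proof, and the part you defer is the entire argument. In true quantum K-theory the correlators are honest holomorphic Euler characteristics of orbifolds; there is no stratum-by-stratum projection formula or fiberwise GRR available for $\text{ft}_*$ until one has passed through Kawasaki--Riemann--Roch to the fake theory on the inertia stack, and it is precisely the interplay between the Euler--Maclaurin expansions of $\Phi_r$ at $q=1$ versus at the nontrivial roots of unity (the latter feeding into the adelic/uniform-polarization bookkeeping) that makes the final answer a single multiplication operator on $\mathcal{K}^\infty$.

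Within your sketch there is also a concrete error: the claim that ``the nodal locus does not enter at the level of a Type I twisting.'' The class $\text{ft}_*\text{ev}_{n+1}^*V_k$ is an index bundle, and its (fake) Riemann--Roch localization over the universal curve has contributions from the generic locus, the markings, \emph{and} the nodes --- exactly as in Coates--Givental. These nodal terms are not optional: the multiplication operator $\Phi_r$ does not preserve the polarization, so its quantization $\widehat{\Phi}_r$ contains a part quadratic in the $p$-coordinates, and in any proof along these lines that $pp$-part must be produced geometrically by the nodal contributions (just as in the paper's proof of Theorem 3.3, where the nodal locus is the whole story). Your marked-point computation can only account for the $pq$-part of the quadratic Hamiltonian of $\log\Phi_r$, i.e.\ for the multiplication action on insertions, not for the full quantized operator relating $\langle\mathcal{D}_X^{\mathbf{E}}\rangle$ to $\langle\mathcal{D}_X\rangle$. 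Relatedly, the step from ``an insertion appears at each marked point'' to ``$\nabla$ is the quantization of $\Phi_r$'' requires differentiating the twisted potential in the parameters of $\mathbf{E}$ and identifying the resulting flow on Fock space with the quadratic Hamiltonian --- including its $\hbar$-dependent and cocycle terms --- and this step is absent. The symplectomorphism check via $\frac{1}{1-q^k}+\frac{1}{1-q^{-k}}=1$ is the right formal observation and is fine.
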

\begin{thm}
For a twisting of type II, the operator $\nabla$ is  the translation on Fock space that changes the dilaton shift from $v_r=1-q$ to $v_r=(1-q)e^{\sum_k \frac{\Psi^{k}(F^{rk}(q)-F^{rk}(1))}{k(1-q^k)}})$. This can be interpreted as leaving the quantum states the same, but changing how they are obtained from the actual potentials.\par
\end{thm}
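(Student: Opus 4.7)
The plan is to reduce the Type II twisting geometrically to the insertion of auxiliary marked points, and then to translate the resulting combinatorial identity into the symplectic formalism on $\mathcal{K}^\infty$. First, I would expand the exponential $\exp(\sum_k \Psi^k(E_k)/k)$ as a formal sum over monomials in the classes $E_k = \mathrm{ft}_* \mathrm{ev}_{n+1}^* F_k(L_{n+1})$. Repeated application of the projection formula identifies each such monomial, up to symmetry factors, with an integral over a larger moduli space $X_{g, n+m, d}$ on which $m$ additional marked points carry insertions $F_{k}(L)$ and are then forgotten. Thus the twisted descendant potential is equal to the untwisted one whose input has been enlarged by a formal series of extra marked points carrying $F_k(L)$ insertions.

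Second, the hypothesis $F_k(1) = 0$ guarantees that each insertion is divisible by $1-L$, which is the untwisted dilaton shift in each $\mathcal{K}^r$. Consequently the auxiliary marked points do not interact with insertions at the original marked points — they only shift the \emph{vacuum}. Organizing the sum over the number of such auxiliary points, the cycle structure imposed by the permutation-equivariant formalism, and the Adams operation relabeling $R_r: F_k \mapsto F_{rk}$, one obtains a geometric series whose formal sum produces a factor of $1/(1-q^k)$ in each term. Taking the logarithm and collecting by cycle length yields
\[ v_r = (1-q)\exp\left(\sum_k \frac{\Psi^k(F^{rk}(q)-F^{rk}(1))}{k(1-q^k)}\right), \]
which is the asserted dilaton shift; the factor $1-q^k$ is the characteristic denominator coming from Euler--Maclaurin asymptotics in the multiplicative formal group law of $K$-theory.

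The main obstacle will be the geometric step: rigorously identifying the pushforward $\mathrm{ft}_*\mathrm{ev}_{n+1}^* F_k(L_{n+1})$ with an insertion at an auxiliary marked point requires the comparison between the cotangent line $L_{n+1}$ on $X_{g,n+1,d}$ and its counterpart on the universal curve $U_{g,n,d}$, together with careful bookkeeping of the contributions arising at collision loci where $L_{n+1}$ specializes to $L_j$. The condition $F_k(1) = 0$ is precisely what forces these boundary corrections to vanish, so that the formal combinatorics yield no extra terms. Once this identification is in place, the fact that $\nabla$ is a pure translation on Fock space — rather than the quantization of a nontrivial symplectic transformation as in Theorem 3.1 — is immediate, since no insertion at an actual marked point has been modified; only the dilaton shift used to extract the quantum state has been repackaged.
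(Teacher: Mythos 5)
First, a point of attribution: the paper does not prove this theorem itself. Theorems 3.1 and 3.2 are quoted from \cite{PermXI}; only the Type III statement (Theorem 3.3) is proved here, and its proof goes entirely through the adelic formula --- passing to $\underline{\mathcal{D}}_X$, computing vertex contributions stratum by stratum via Lefschetz--Kawasaki--Riemann--Roch and To\"en's GRR at each root of unity, and translating back. The proof of the Type II statement in \cite{PermXI} follows the same adelic strategy. Your proposal instead attempts a direct geometric argument on $X_{g,n,d}$ itself. Parts of it are sound: realizing monomials in the $E_k$ as integrals over moduli spaces with auxiliary marked points is the right picture, and your observation that $F_k(1)=0$ kills the corrections at the collision divisors (where $L_{n+1}$ trivializes on a three-pointed bubble) is correct and is indeed the reason for that normalization. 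The logical scheme ``insertion of a fixed series $\delta t$ at extra points $\Rightarrow$ $\mathcal{D}^{tw}(\mathbf{t})=\mathcal{D}(\mathbf{t}+\delta t)$ $\Rightarrow$ translation of the dilaton shift'' is also the right way to interpret the conclusion.

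The genuine gap is in the mechanism producing the denominators $k(1-q^k)$. The sum over the number of auxiliary points produces the \emph{exponential} in the formula, not the factor $1/(1-q^k)$; attributing the latter to ``a geometric series over auxiliary points'' or to ``Euler--Maclaurin asymptotics'' (a phrase belonging to the Type I theorem) is not an argument. The actual source is the interaction of the Adams operations with the pushforward: the twisting class involves $\Psi^k(E_k)=\Psi^k(\mathrm{ft}_*\mathrm{ev}^*F_k(L_{n+1}))$, and $\Psi^k$ does \emph{not} commute with $\mathrm{ft}_*$. One must either invoke Adams--Riemann--Roch, where the Bott cannibalistic class of the relative cotangent line contributes factors of the form $(1-L^{-1})/(1-L^{-k})$ whose fiberwise pushforward generates the $1/(k(1-q^k))$, or (as in \cite{PermXI} and in Section 7 of this paper for Type III) compute $\Psi^k\circ\mathrm{ft}_*$ via the Kawasaki strata, summing over $k$-th roots of unity sectors --- exactly the step your Lemma-7.1-analogue would have to supply. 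Your proposal silently treats $\Psi^k(E_k)$ as if it were itself a pushforward of an insertion at an extra point, which is false, and as a result the key quantitative content of the theorem (the precise series $\sum_k \Psi^k(F^{rk}(q)-F^{rk}(1))/(k(1-q^k))$) is asserted rather than derived. Relatedly, for \emph{true} (permutation-equivariant) $K$-theoretic correlators, which are supertraces of sheaf cohomology rather than integrals, the reduction to a statement about quantum states in $\mathcal{K}^\infty$ cannot avoid the adelic/Kawasaki machinery that your outline bypasses.
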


In this work, we prove an analogous theorem for twistings of type III:\par
\begin{thm}
For a twisting of type III, the operator $\nabla$ is of the form $e^{\frac{\hbar}{2}\sum_r\Delta_r}$, where $\Delta_r$ is an order-2 differential operator on determined by insertion of the symmetric tensor $$\frac{e^{\sum \frac{\Psi^{k}}{k}(F_{rk}(L_+,L_-))(1-L_+^kL_-^k)\phi^\alpha\otimes\phi_\alpha}-1}{1-L_+L_-}\in \mathcal{K}_+\otimes \mathcal{K}_+$$ into $t_r$.

The result is equivalent to changing the negative space of the polarization on $\mathcal{K}^\infty$. 
\end{thm}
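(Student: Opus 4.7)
The plan is to follow the strategy of Givental's adelic analysis from \cite{PermIX} used in the proofs of Theorems 3.1 and 3.2, adapting it to the bilinear geometry of the node locus. The essential new feature is that a node has two branches carrying independent cotangent lines $L_+$ and $L_-$, so the combinatorics naturally produces a symmetric bivector on $\mathcal{K}^\infty_+\otimes\mathcal{K}^\infty_+$ rather than the simple multiplication (Type I) or translation (Type II) appearing previously. This is what forces $\nabla$ to be second order in Fock coordinates, and hence to act as the quantization of a change of polarization rather than of a symplectomorphism of $\mathcal{K}^\infty$.

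First, I would exploit the stratification of $\mathcal{Z} \hookrightarrow U_{g,n,d}$ by gluing morphisms: each node corresponds either to a separating degeneration, gluing a pair of stable map moduli with degrees and genera summing to $(d,g)$, or to a non-separating one self-gluing $X_{g-1,n+2,d}$. Under these maps the line bundles $L_{\pm}$ become the cotangent lines at the two glued marked points, so $\text{ft}_*\text{i}_*\text{ev}^*F_k(L_+,L_-)$ reorganizes as a sum of insertions of $F_k(L_+,L_-)\,\phi^\alpha\otimes\phi_\alpha$ at pairs of marked points on lower-complexity moduli. Expanding $e^{\sum_k \Psi^k(E_k)/k}$ and summing over the resulting configurations produces a Gaussian action on $\mathcal{D}_X$, i.e. the exponential of an order-2 differential operator in the $t_r$. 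Keeping track of the $S_n$-equivariant structure, only cycles of length $r$ contribute to the $r$-th factor of $\mathcal{K}^\infty$, which converts $F_k$ to $F_{rk}$ exactly as in the earlier twisting theorems.

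Second, I would perform the Euler-Maclaurin / adelic summation on each branch. On the nodal locus each cotangent line contributes an independent scaling, and summing the geometric series $\sum_{m\ge 0}(L_+L_-)^m$ produces the denominator $1-L_+L_-$; the subtraction of $1$ in the numerator removes the trivial empty-twisting contribution so that the resulting tensor lies in $\mathcal{K}_+\otimes\mathcal{K}_+$, as required for a genuine change of polarization rather than an ill-defined action. Recognizing the operator as a change of $\mathcal{K}^\infty_-$ then amounts to computing the Lagrangian deformation of the negative subspace induced by the bivector and checking it matches the propagator predicted by the standard Fock-space quantization formula $e^{\hbar S/2}$ for symmetric bivectors $S$.

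The main obstacle will be the careful bookkeeping in the gluing decomposition: each node carries an $S_2$ symmetry between $L_+$ and $L_-$, the separating-versus-non-separating split introduces genus-counting factors of $\hbar^{\pm 1}$, and the pushforward along $\text{ft}\circ \text{i}$ generates orbifold multiplicities that must be reconciled with the symmetry factors of the Fock-space exponential. The technical heart of the argument will be showing that, after all these combinatorial factors are accounted for, the adelic asymptotics collapse precisely to the claimed kernel $(e^{\sum_k (\Psi^k/k)F_{rk}(L_+,L_-)(1-L_+^kL_-^k)\phi^\alpha\otimes\phi_\alpha}-1)/(1-L_+L_-)$, with no leftover contribution that would deform $\mathcal{K}^\infty_+$.
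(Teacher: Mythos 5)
Your high-level framing is right --- the bilinear geometry of the node forces a second-order operator, i.e.\ a change of polarization rather than a symplectomorphism --- but the concrete argument you sketch in your first paragraph is essentially the proof of the \emph{fake} (or cohomological) version of the statement, not the true one. Decomposing $\mathcal{Z}\hookrightarrow U_{g,n,d}$ by gluing morphisms and expanding $e^{\sum_k \Psi^k(E_k)/k}$ into a Gaussian action on the potential works when correlators are integrals (as in \cite{TwistedOrb}, Prop.~3.9), but here the correlators are supertraces of sheaf cohomology, and the only access to them is through the Kawasaki--Riemann--Roch formula. The proof therefore has to run the entire analysis on the Kawasaki strata of $\mathcal{I}X_{g,n,d}^h$: the nodal locus splits into \emph{balanced} nodes (which can be smoothed within a stratum and yield the vertex contributions, i.e.\ a change of polarization within each sector of the adelic space) and \emph{unbalanced} nodes (the edges of the stratum graph, which modify the adelic propagators), and these two cases require separate computations before the adelic formula reassembles them into the claimed $e^{\frac{\hbar}{2}\sum_r\Delta_r}$. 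Nothing in your sketch produces this dichotomy, and your appeal to ``Euler--Maclaurin / adelic summation on each branch'' does not substitute for it: Euler--Maclaurin asymptotics are the mechanism for Type I twistings, not Type III.

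The specific missing step is the analogue of the paper's Lemma 7.1: a Toen/Grothendieck--Riemann--Roch computation over the inertia stack of the nodal locus, where summing over characters $\lambda$ of $\Z_M$ and over the automorphism group of the node (acting on $L_\pm$ by $\zeta^{\mp 1}$) imposes the selection rule $\lambda^{r}=\zeta^{b-a}$. It is this selection rule, not ``$S_n$-equivariant bookkeeping,'' that forces $k=rl_0$ and hence converts $F_k$ into $F_{rk}$; and it is the substitution $L_\pm=\widehat{L}_\pm^{1/m}$ on the quotient curve together with the excess/normal-bundle Euler class $1-\widehat{L}_+^{1/m}\widehat{L}_-^{1/m}$ (not a geometric series) that produces both the factor $1-L_+^kL_-^k$ in the numerator and the denominator $1-L_+L_-$ --- with the further subtlety that the denominator must effectively be $\Psi^r(1-L_+L_-)$, reflecting that one node of the quotient curve corresponds to $r$ coherently deformed nodes of the cover; this is also where the weight $\hbar^r$ (rather than $\hbar$) in the $r$-th component comes from. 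Without these ingredients the proposal cannot derive the stated kernel, so as written there is a genuine gap.
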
\par

We take a moment to describe concretely the change of polarization in Theorem 3.3. The operator is the quantization of the time-1 flow of the quadratic Hamiltonian given in standard Darboux coordinates by $(p,Sp)$, which results in the symplectomorphism given in Darboux coordinates as $(p,q)\mapsto (p,q+Sp)$. \par
Phrased invariantly, $S$ is the map $\mathcal{K}_-\to \mathcal{K}_+$ given in the $r$th coordinate by dualizing the symmetric tensor$\frac{e^{\sum \frac{\Psi^{k}}{k}(F_{rk}(L_+,L_-))(1-L_+^kL_-^k)\phi^\alpha\otimes\phi_\alpha}-1}{1-L_+L_-}$ with respect to the symplectic form. It induces a change of negative space from $q=0$ to $q=Sp$, but leaves $\mathcal{K}_+$ constant. 
The twisted potential represents the same quantum state as the untwisted one, but in the Fock space on $\mathcal{K}^\infty$ constructed with respect to a different negative polarization. \par
If one interprets $\frac{1}{1-L_+L_-}$ as a tensor in $\mathcal{K}^\infty_+\otimes \mathcal{K}^\infty_-$, then dualizing gives the identity map from $\mathcal{K}^\infty_-$ to $\mathcal{K}^\infty_-$. Renaming $L_+$ to $q$ and $L_-$ to $x$, this map is given by $f(q)\mapsto -\Res_{0,\infty} \frac{f(q)}{1-q^{-1}x}\frac{dq}{q}$, provided $x$ is interpreted as being close to infinity. \par
Adding this to the map $\mathcal{K}^\infty_-\to \mathcal{K}^\infty_+$ determined from the operator gives the map $\mathcal{K}^\infty_-\to \mathcal{K}^\infty$ sending $(p,0)$ to $(p,Sp)$. In summary, the new polarization is determined in the $r$th coordinate by the image of $\mathcal{K}_-$ under the map obtained by dualizing the expression \par
$$\frac{e^{\sum \frac{\Psi^{l}}{l}(F_{rk}(L_+,L_-))(1-L_+^kL_-^k)\phi^\alpha\otimes\phi_\alpha}}{1-L_+L_-}$$\par

We will henceforth use expressions of this kind to label polarizations.
\section{Multiplicative quantum cobordism theory}

\subsection{Complex-oriented cohomology theories and formal group laws}
A complex-oriented cohomology theory is a generalized cohomology theory which admits Chern classes for complex vector bundles. For a theory $A^*$, a complex orientation is determined its value on the universal line bundle, which is an element $u_A$ in $A^2(\C P^\infty)$. For the standard orientation of cohomology, this element is traditionally denoted $z$. In $K$-theory, we use $1-q^{-1}$, where $q$ is the class of the universal line bundle itself.\par

A complex-oriented cohomology theory defines a formal group law by the rule $$c_1(L_1\otimes L_2)=F(c_1(L_1),c_1(L_2)).$$ For ordinary cohomology theory, it is additive formal group law, for $K$-theory, the result is the multiplicative formal group law since $c_1^K(L_1\otimes L_2)=1-L_1^{-1}L_2^{-1}$.\par

Complex cobordism theory is the cohomology theory defined by the Thom spectrum. It admits a tautological orientation $u$ coming from the isomorphism between the Thom space of the universal line bundle and $\C P^\infty$. This orientation is universal in the following sense: a choice of complex orientation on a homotopy commutative ring spectrum $A$ corresponds to a map $\phi:MU\to A$. Similarly, the formal group law associated with $\text{MU}^*$ is the universal one, meaning that the coefficients of the defining power series are free generators of $\text{MU}^*(pt)$, which is the ring of manifolds under complex cobordism. Over $\mathbb{Q}$, it a polynomial ring generated by $\C P^{k}$ in degree $-2k$. \par
 The Chern-Dold character mentioned in the introduction is the isomorphism $U^*(X)\to H^*(X,U^*(pt))$ determined by sending $u\in U^2(\C P^\infty)$ to $u(z)\in H^*(\C P^\infty, U^*(pt))$, where $u(z)$ is the exponential of the cobordism-theoretic formal group law. Specializing to $K$-theory gives the series $1-e^{-z}$, which is indeed an isomorphism from the additive to the multiplicative formal group. \par 
 The multiplicative Chern character $\Ch_K: U^*(X)\to K^0(X)\otimes U^*(pt)$ is defined by $Ch\circ ch^{-1}$, and it is determined by the image $u\in U^2(\C P^n)$, which is a power series in $1-q^{-1}$ which we denote by $u(1-q^{-1})$. \par

As mentioned in the introduction $\Ch_K$ and the cobordism-theoretic pushforward map satisfy a Hirzebruch-Riemann-Roch formula, i.e. for $\pi$ the map $X\to pt$, we have: $$\pi_*\alpha=\chi(X;Ch_K(\alpha)\Td_K(TX)),$$ 
where the \emph{multiplicative Todd class} $\Td_K$ is the universal stable exponential characteristic class in $K$-theory. It is defined on the universal line bundle by the formula $\frac{1-q^{-1}}{u(1-q^{-1})}$. \par
This is a consequence of a more general theorem of Dyer that gives a similar result between any two cohomology theories \cite{Dyer}, but can be viewed more concretely as combination of the Hirzebruch-Riemann-Roch formula for $U^*$ and the usual one relating pushforwards in $K$-theory and cohomology.\par

The logarithm of the formal group law of cobordism theory is given by Mischenko's formula as $$z(u)=u+\sum_{n\geq 1} [\C P^{n}]\frac{u^{n+1}}{n+1}.$$\par

One can thus explicitly compute $u(1-q^{-1})$ as the series inverse of $1-e{-z(u)}$. There are some generators $b_k$ of $U^*(pt)$ such that: $u(1-q^{-1})=1-q^{-1}+\sum_{k\geq 1} b_k (1-q^{-1})^{k+1}$. \par

Similarly: $ln(\frac{1-q^{-1}}{u(1-q^{-1})})=\sum_{k\geq 1} a_k(1-q^{-1})^k$ for a different set of generators $a_k$. After completing with respect to this grading, $\sum_{k\geq 1} a_k(1-q^{-1})^k$ can be rewritten as a series in $q^{-k}$, denoted $s(q)=\sum_{k\geq 0} c_kq^{-k}$. The $c_{k\geq 1}$ are independent in the completion of $U^*(pt)$, but $c_0$ is determined by the requirement $s(1)=0$.\\

Since $\Td_K$ is multiplicative and Adams operations are additive, the formula for the multiplicative Todd class of a general bundle is:
$$\Td_K(\cdot)=e^{\sum_{k\geq 0} \frac{c_k}{k}\Psi^k(\cdot)}.$$
Here $\Psi^0$ is the rank operator. This is the universal $K$-theoretic characteristic class mentioned in the introduction, with the additional requirement of stability. \par

The stability requirement can be relaxed in the following way. Given a characteristic class $C$ with $C(1)=t$ for $t$ some unit, we can regard it as coming from a series $\frac{1-q^{-1}}{u(1-q^t)}$, where $u(1-q^t)$ is a homomorphism the multiplicative group with orientation given by $(1-q^{-t})$ instead of $(1-q^{-1})$. This scales the logarithm $z(u)$ by a factor of $\frac{1}{t}$. \par

Using $C$ and $u(1-q^{-t})$ define new versions of $\Ch_K$ and $\Td_K$, however, the resulting pushforwards have the same value as if we used the normalized version of $C$ instead. To see this, apply the ordinary Riemann-Roch formula to rewrite $\chi(X; Ch_K(\alpha)\Td_K(TX))$ as an integral over $X$. The $\frac{1}{t}$ coefficients appearing from the expansion of $\td(\Td_K(TX))$ and $\ch(\Ch_K(\alpha))$ cancel in the top degree. The same result is true in the orbifold setting, which can be shown by applying Kawasaki-Riemann-Roch and then considering top-degree terms on each stratum.\par 

Keeping this in mind, the $c_0$ term in the exponential expression for $Td_{K}$ can be ignored, provided we apply the above modifications consistently.  

\subsection{Cobordism-valued Gromov-Witten invariants}
Define the algebra $U$ to be $\widehat{U}^*(X)$, where the hat denotes completion by the grading introduced in the previous section, and further completion to ensure $u(1-q^{-1})$ is a Laurent polynomial in $q$ (the latter may involve adding an additional variable). \par

Define $q(u)$ to be $e^{z(u)}$. The inputs to cobordism-theoretic correlators are drawn from $U[q(u)^{\pm}]$, regarded as a subalgebra of $U(u)$. This algebra contains $u$ as well as $u^*:=u(1-q(u))$, which represents the first $U^*$-theoretic Chern class of the dual to the universal line bundle.\par

For $\alpha_i\in U[q(u)^{\pm}]$, the cobordism theoretic correlators are defined via the right hand side of the Hirzebruch-Riemann-Roch formula, i.e.  $$\langle \alpha_1,\dots,\alpha_n\rangle^U_{g,n,d}=\chi(X_{g,n,d};\mathcal{O}^{vir}\cdot \Td_K(\mathcal{T}^{vir}) \prod_{i=1}^n ev_i^*\Ch_K\alpha_i(L_i)).$$ \par

The genus $g$ and total descendant potentials $\mathcal{F}^{g.U}_X$ and $\mathcal{D}_X^U$ are defined in the same way as for $K$-theory. \par

\subsection{The loop space \texorpdfstring{$\mathcal{U}$}{U}}
We construct the space $\mathcal{U}$ in a similar manner to $\mathcal{K}$. As a $U$-module, $\mathcal{U}$ is defined as $U[q(u)^\pm]$ localized at $1-q(u)^m$ for each $m\in \Z_{\neq 0}$. The symplectic form is $$\Omega^U(f,g):=Res_{q(u)=0,\infty} (f(u),g(u^*))^{U}dz(u).$$ $(,)^U$ denotes the cobordism-theoretic Poincare pairing. \par

As with $\mathcal{K}$, $\mathcal{U}_+$ is $U[q(u)^\pm]$, however the negative space is not the natural analogue of $\mathcal{K}_-$, consisting of functions holomorphic at 0 and vanishing at $\infty$. Rather, it is obtained from that space by dualizing the symmetric tensor $\frac{1}{c_1^U(L_1^*\otimes L_2^*)}$, and taking the image under the resulting linear map. With the above data, $\mathcal{D}_X^U$ defines a quantum state $\langle \mathcal{D}_X^U\rangle$ of $\mathcal{U}$ after a dilaton shift of $u^*$.\par

\section{Formula for \texorpdfstring{$\mathcal{D}_X^U$}{}}
We are now in the position to state the formula relating $\mathcal{D}_X^K$ and $\mathcal{D}_X^U$:

The \emph{quantum multiplicative Chern character} $\qCh_K$, defined by extending $\Ch_K$ by $u\mapsto u(1-q^{-1})$, (equivalently $q(u)\mapsto q$), is a linear isomorphism from $\mathcal{U}$ to $\mathcal{K}$ (provided $\Lambda$ is chosen to be $u^*(pt)$ completed appropriately). $\qCh_K$ is not a symplectomorphism, since it transforms the cobordism-theoretic Poincare pairing into the $K$-theoretic pairing with the insertion of $t\cdot \Td_K(TX)$. Furthermore, it does not identify dilaton shifts nor polarizations. Roughly, after correcting these discrepancies, $\qCh_K$ identifies the quantum states. More precisely, the following formula holds:

\begin{thm}
$$\qCh_K\langle D_X^U\rangle=\nabla\langle D_X^K\rangle$$
Where $\nabla$ consists of 3 operators:

\begin{itemize}
    \item The quantization of the scalar multiplication by the asymptotic expansion of $\Td_K(\frac{TX-1}{1-q})$, which is regarded as a symplectomorphism from $\mathcal{K}$ with symplectic structure twisted by $t\cdot \Td_K(TX-1)$ to $\mathcal{K}$ with its original symplectic structure. Thus viewed, the quantization acts in the opposite direction. 
    
    \item A translation operator on Fock space which changes the dilaton shift to from $1-q$ to $\qCh_K(u^*)=u(1-q)$. 
    \item The quantization of a symplectomorphism of the form $(p,q)\mapsto (p,q+Sp)$, which leaves $\mathcal{K}_+$ unchanged and changes $\mathcal{K}_-$ into $\qCh_K(\mathcal{U}_-)$. 
\end{itemize}
\end{thm}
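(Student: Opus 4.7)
The plan is to reduce the statement to the three twisting theorems (3.1--3.3) by expressing cobordism-theoretic correlators as K-theoretic correlators with an appropriate multiplicative twisting, and then decomposing that twisting according to the K-theoretic structure of the virtual tangent bundle on $X_{g,n,d}$.

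First, using the definition of cobordism correlators from Section 4.2, each invariant is
\[\langle \alpha_1,\dots,\alpha_n\rangle^U_{g,n,d} = \chi\Bigl(X_{g,n,d};\, \mathcal{O}^{vir}\cdot \Td_K(\mathcal{T}^{vir}) \cdot \prod_i \text{ev}_i^*\Ch_K\alpha_i(L_i)\Bigr),\]
so $\mathcal{D}_X^U$ is equivalent, after applying $\qCh_K$ to the inputs and transporting the Poincar\'e pairing, to the K-theoretic descendant potential twisted by $\mathbf{E} = \Td_K(\mathcal{T}^{vir})$. Next, I would invoke the standard K-theoretic formula for the virtual tangent bundle,
\[\mathcal{T}^{vir}_{g,n,d} = \text{ft}_*\text{ev}^*(TX-1) + \text{ft}_*(L_{n+1}^{-1}-1) - \text{ft}_* i_*\mathcal{O}_{\mathcal{Z}},\]
separating deformations of the map, deformations of the pointed source, and smoothings of nodes. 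Because $\Td_K = e^{\sum c_k\Psi^k/k}$, each summand contributes a multiplicative twisting of type I, II, and III respectively, with Laurent polynomials determined by the coefficients $c_k$ of $\Td_K$. Hence $\mathbf{E} = \mathbf{E}_I\mathbf{E}_{II}\mathbf{E}_{III}$.

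Applying Theorems 3.1--3.3 in turn then matches the three operators in the statement. Theorem 3.1 applied to $\mathbf{E}_I$ produces the quantization of multiplication by the Euler--Maclaurin asymptotics of $e^{\sum_k c_k \Psi^k(TX-1)/(k(1-q^k))}$, which by definition of $\Td_K$ is the shorthand $\Td_K\bigl(\tfrac{TX-1}{1-q}\bigr)$ in the first operator; the associated change of symplectic structure matches the twisted pairing $t\cdot\Td_K(TX)$ introduced by $\qCh_K$ on the cobordism Poincar\'e pairing. Theorem 3.2 applied to $\mathbf{E}_{II}$ with $F_k(L) = c_k(1-L^{-1})^k/k$ shifts the dilaton from $1-q$ to $(1-q)e^{\sum c_k(F_k(q)-F_k(1))/(k(1-q^k))}$, which a direct computation using $\ln u(1-q) = \sum a_k(1-q)^k$ and the definition of the $c_k$ identifies with $u(1-q) = \qCh_K(u^*)$. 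Theorem 3.3 applied to $\mathbf{E}_{III}$ yields the quadratic Hamiltonian flow whose associated polarization change should match the definition of $\mathcal{U}_-$ via dualization of $1/c_1^U(L_1^*\otimes L_2^*)$.

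The hard part will be the last of these identifications: verifying that the symmetric tensor produced by Theorem 3.3 applied to the nodal $F_k$ coefficients coincides, modulo the $\frac{1}{1-L_+L_-}$ correction relating the Hamiltonian to the polarization change, with the tensor $\frac{1}{c_1^U(L_1^*\otimes L_2^*)}$ defining $\qCh_K(\mathcal{U}_-)$. This ultimately reduces to a manipulation of formal-group identities relating $u$, $u^*$, $q(u)$, and the Mischenko logarithm from Section 4.1, but aligning the two descriptions of the polarization change requires careful bookkeeping. Once this is in hand, combining the three operators yields $\nabla$ as stated.
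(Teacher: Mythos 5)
Your proposal follows essentially the same route as the paper: identify $\qCh_K\langle\mathcal{D}_X^U\rangle$ with the $K$-theoretic potential twisted by $\Td_K(\mathcal{T}^{vir})$, invoke the Coates decomposition of $\mathcal{T}^{vir}$ into pieces of types I, II, III, and apply Theorems 3.1--3.3 to match the three operators (multiplication/pairing change, dilaton shift to $u(1-q)$, and polarization change to $\qCh_K(\mathcal{U}_-)$ via the tensor $1/c_1^U(L_+^*\otimes L_-^*)$). The remaining verifications you flag are exactly the short formal-group computations the paper carries out, so the approach is correct and matches the paper's own proof.
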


$\qCh_K$ identifies the potentials $\mathcal{D}_X^U$ and $\mathcal{D}_X^{K,tw}$, where the twisting class is $\Td_K^U(\mathcal{T}^{vir})$. We recall the decomposition of $\mathcal{T}^{vir}$ in $K^0(X_{g,n,d})$ proved in \cite{Coatesthesis}:  $$\mathcal{T}^{vir}=-\text{ft}_*(L^{-1}-1)+\text{ft}_*(\text{ev}^*(T_X-1))-\text{ft}_*i_*\mathcal{\mathcal{O}_Z}^*.$$ Thus twisting by $\Td_K(\mathcal{T}^{vir})$ induces one twisting of each type.\par 

By theorems 2.1, 2.2, and 2.3, $\langle \mathcal{D}_X^{tw} \rangle=\nabla' \langle \mathcal{D}_X\langle$, where  $\nabla'$ is an operator that encodes a change of symplectic form, dilaton shift, and polarization.\par 

So the formula is equivalent to showing that $\nabla=\nabla'$, and that $\qCh_K$ is a symplectomorphism, which respects dilaton shift and polarization, provided that the symplectic structure on $\mathcal{K}$ is the one determined by $\nabla$. \par
Twisting by $\Td_K(\mathcal{T}^{vir})=e^{\sum_{k>0} \frac{\Psi^k}{k}(s_k\mathcal{T}^{vir})}$, for some particular choices of $s_k$, results in three twistings, one of each type:
\begin{itemize}
    \item Type I: $e^{\sum_{k<0} \frac{\Psi^k}{k}(s_k\text{ft}_*(1-L^{-1}))}$
    \item Type II: $e^{\sum_{k<0} \frac{\Psi^k}{k}(s_k\text{ft}_*ev^*(TX-1))}$
    \item  Type III: $e^{\sum_{k<0}\frac{\Psi^k}{k}(s_k(-\text{ft}_*i_*\mathcal{\mathcal{O}_Z})^*}$. 
\end{itemize} 
These result in the following changes:\par
\begin{itemize}

    \item Multiplication operator and symplectic pairing: Since the twisting of type I is $\Td_K(\text{ft}_*\text{ev}^*(T_X-1))$, the resulting multiplication operator is equivalent to changing the Poincare pairing into the following: $$\frac{1}{\Td_K(1)}\Res_{q=0,\infty}\chi(X;f(q)g(q^{-1}) \Td_K(T_X))\frac{dq}{q}.$$ The operator itself is the asymptotic expansion of $\frac{\prod_{m\leq 0}\Td_K(\alpha_iq^m)}{\Td_K(q^m)}$. The residue operations on $\mathcal{U}$ and $\mathcal{K}$ themselves coincide since $\qCh_K(dz(u))=\frac{1}{\Td_K(1)}d\log(q)=\frac{1}{\Td_K(1)}\frac{dq}{q}$. 
    
    \item Dilaton shift: The dilaton shift changes to $(1-q)e^{\sum_{k<0}\frac{\Psi^k}{k}(s_k(1-q))}$, which is the asymptotic expansion of $(1-q)\Td_K(q^{-1})=u(1-q)=\qCh_K(u^*)$.

    \item Change in polarization: The twisting of type III is by $\Td_K(-\text{ft}_*\text{i}_*\mathcal{\mathcal{O}_Z}^*)=1/\Td_K^*(\text{ft}_*i_*\mathcal{\mathcal{O}_Z})$.
    Here $\Td_K^*(V)$ denotes $\Td_K(V^*)$. So the expression determining the new polarization is:
    $$\frac{1}{(\Td_K^*(1-L_+L_-)(1-L_+L_-)}=\frac{\Td_K(L_+L_-^*)}{\Td_K(1)(1-L+L_-)}=\qCh_K(\frac{1}{c_1^U(L_+^*L_-^*)})$$

\end{itemize}

\section{Specialization and examples}
\subsection{Other cohomology theories}
Over $\mathbb{Q}$, the universality of cobordism theory also holds for cohomology rings. Given a cohomology theory $A$, the specialization map $\phi: U^*(pt)\to A^*(pt)$ is given by sending $[\C P^n]$ to the pushforward to the point of the class $1\in A^*(\C P^n)$. One recovers $A^*(X)$ by restriction of scalars from $U^*(X)$, which is exact over $\mathbb{Q}$. \par 

In this way, one can in principle specialize the constructions of the previous section to any complex oriented cohomology theory, and thus define Gromov-Witten invariants valued in that theory. However, since we use a completed version of $U^*$, the map $\qCh_K$ and the class $\Td_K(\mathcal{T}^{vir})$ will only be well-defined if $\phi$ factors through the completion, i.e. if $u_A(1-q^{-1})$ is actually a Laurent polynomial in $q$. \par 
We can also use the same framework to define invariants for algebraically-oriented theories. Levine and Morel's theory of algebraic cobordism outlined in \cite{Levine} has the same universality properties among algebraic theories as $\text{MU}^*$ does for complex oriented ones. The necessary Riemann-Roch theorems are due to Smirnov (\cite{Smirnov}). So the formalism we have constructed works equally well in this context. \par
We can also in principle extend multiplicative cobordism theory can also to permutation-equivariant invariants by replacing the holomorphic Euler characteristics used to define the correlators with supertraces, and constructing the resulting potentials analogously to the $K$-theoretic case. However the twisting class in the $r$th coordinate becomes $e^{\sum s_{rk}\frac{\Psi^k}{k}}$, so each input $t_r$ would have to be interpreted as coming from a different specialization of cobordism theory. If we are working in a specialization where $s_{rk}=\Psi^r(s_k)$, this is not an issue. 

\subsection{Example: the \texorpdfstring{$\chi_{-y}$}{}-genus and Hirzebruch \texorpdfstring{$K$}{K}-theory}
The Hirzebruch $\chi_{-y}$-genus is a polynomial deformation of the holomorphic Euler characteristic, it is defined on complex manifolds by $$\chi_{-y}(C)=\chi(M;\sum_{p}(-y)^p\Omega_X^p),$$ with a $y$ a formal variable.\par

The same definition extends to virtually smooth orbifolds $\mathcal{X}$ if we interpret $\Omega_\mathcal{X}$ to the the virtual cotangent bundle. The class $C_y(V):=\sum_p(-y)^p(V^*)^p)$ can be rewritten as $e^{\sum_{k\leq 0} \frac{1}{y^k}\frac{\Psi^k}{k}(V)^k}$. We can treat this as if it were a multiplicative Todd class, but from $K$-theory to itself, with a different choice of complex orientation, we call this modification of $K$-theory \emph{Hirzebruch $K$-theory}. \par 

We can thus use the class $C_y(\mathcal{T}^{vir})$ to define Gromov-Witten invariants, which in generic situations compute (virtual) $\chi_{-y}-$genera of suborbifolds $X_{g,n,d}$ representing stable maps subject to certain restrictions. If $L$ is a line bundle then $C_y(L)=1-yL^*$, and $C_y(1)=1-y$. Hence $C_y$ comes from the morphism of formal group laws given by the series $u(1-q^{-1})=\frac{1-q^{-1}}{1-yq^{-1}}$. If we complete the base algebra with respect to $y$, $u(1-q^{-1})$ becomes a Laurent polynomial in $q$. We refer to $K$-theory twisted this way as \emph{Hirzebruch $K$-theory}. \par
If we let Adams operations act on $y$ as $\Psi^k(y)=y^k$, then $C_y(V)$ is the $S^1$-equivariant $K$-theoretic Euler class of $V$ with equivariant parameter $y$. In this guise these invariants appear in the study of the quantum $K$-theory of Grassmanians, as in \cite{Xiaohan}. Note that this also means these invariants extend naturally to the permutation-equivariant case. \par

Using the formula from section 5, the transition to Hirzebruch $K$-theory has the following effects on the symplectic loop space: 
\begin{itemize}
\item The multiplication operator from the type I part of the twisting changes the Poincare pairing to $(a,b)=\chi(X;a\cdot b\cdot \Td_y(TX))$, and scales the symplectic form by $\Td_y(1)=\frac{1}{1-y}$. 
\item The dilaton shift becomes $u(1-q)=\frac{1-q}{1-yq}$, demonstrating formal group inversion.
\item The subsequent polarization changes to the one determined by $\frac{1}{1-y}\frac{1-yL_+L+_-}{1-L_+L_-}$. The map $f\mapsto -\Res_{0,\infty}\frac{1}{1-y}\frac{f(q)(1-yq^{-1}x)}{q-x}\frac{dq}{q}$ sends $f\in \mathcal{K}^\infty_-$ to $f+\frac{y}{1-y}f(0)$, so the new negative space is $\{f: f(\infty)=yf(0)\neq \infty\}$.
\end{itemize}

For genuinely smooth orbifolds, when $y=1$, this version of the $\chi_{-y}$-genus becomes the ordinary topological Euler characteristic. So in cases where $X_{g,n,d}$ is genuinely smooth, which includes cases where $X$ is homogeneous, in particular $\bar{M}_{g,n}$. Applying the corresponding approach using the cohomologically defined invariants of \cite{QuantCob} instead yields the orbifold Euler characteristic, which is a rational number given by a weighted count of simplices. This illustrates the general principle that multiplicative cobordism-theoretic invariants will have different relationships to the orbifold structure of $X_{g,n,d}$ than ``fake'' ones.
\par
However, the symplectic formalism degenerates in this limit, so any computations must be done for a general $y$, and then specialized. We postpone a detailed discussion of the kinds of invariants that thus occur to another work.

\section{Proof of theorem 3.3}

\subsection{Adelic Formula for \texorpdfstring{$\mathcal{D}_X$}{DX}}
We recall the adelic formula for $\mathcal{D}_X$, which recasts the $K$-theoretic potential into purely cohomological terms. The proof of theorem 2.2 will rely heavily on this formula.\par
We define the adelic symplectic loop space $\underline{\mathcal{K}}^\infty=\bigoplus_{M\in \Z_+} \mathcal{K}^{fake}(X\times B\Z_M)$, where $\mathcal{K}^{fake}(X\times B\Z_M)$ denotes the loop space of the fake quantum $K$-theory of the orbifold $X\times B\Z_M$. Each summand splits as a direct sum of $M$ sectors $\mathcal{K}_M^\zeta$ labelled by roots of unity $\zeta$, each isomorphic to $K((q-1))$. \par
The symplectic structure on $\mathcal{K}^{fake}(X\times B\Z_M)$ comes from an additional twisting of fake quantum $K$-theory which we outline later, and is described as follows: The symplectic form $\Omega^{tw}$ pairs $\mathcal{K}_M^\zeta$ with $\mathcal{K}_M^{\zeta^{-1}}$ by $\Omega^{tw}(f,g)=\frac{1}{M}(f(q),g(q^{-1}))^{(r)}$, where $(\Psi^ra,\Psi^rb)^{(r)}=r\Psi^r(a,b)$, for $(a,b)$ the usual Poincare pairing, and $r$ is the index of $\langle\zeta\rangle$ in $\Z_M$. Let $m(\zeta)=\frac{M}{r(\zeta)}$ denote the primitive order of $\zeta$.\par
Define the adelic potential $\underline{\mathcal{D}}_X$ to be $\bigotimes_M\mathcal{D}^{tw}_{X\times B\Z_M}$. \par
We can resum the component spaces according to $r$, to describe the adelic space as:$$\bigoplus_{\zeta}\bigoplus_r \mathcal{K}_r^\zeta$$ Where the first sum is taken over all roots of unity.\par
After resumming, the symplectic form becomes $$\underline{\Omega}^\infty(f,g)=\sum_{\zeta}\frac{1}{m(\zeta)}\sum_rRes_{q=1}(f_r^\zeta(q^{-1}),g_r^{\zeta^{-1}}(q))^{(r)}\frac{dq}{q}.$$\par
The adelic map $\Phi: (f_1,\dots)\mapsto \Psi^r(f_r(\frac{q^{\frac{1}{m}}}{\zeta}))$ defines a symplectic $\Psi$-linear transformation between $\mathcal{K}^\infty$ and $\underline{\mathcal{K}}^\infty$, which respects positive, but not negative polarizations, since an element of $\mathcal{K}^\infty_-$ will not be polar at every root of unity. \par
A result of \cite{PermIX} is that $\langle \mathcal{D}_X\rangle=\Phi^*e^{\hbar/2\sum_{r,\zeta\eta\neq 1}\nabla_{r,\zeta,\eta}}\underline{\mathcal{D}}_X$, where $exp(\hbar/2\sum_{r,\zeta\eta\neq 1}\underline{\nabla}_{r,\zeta,\eta})$ is the quantization of the rotation changing the standard polarization on $\underline{\mathcal{K}}^\infty$ to the uniform polarization, which is determined by the image of $\mathcal{\mathcal{K}_-}^\infty$ under $\Phi$.\par
This formula has the form of Wick's summation over graphs, and arises from the application of the Lefschetz-Kawasaki-Riemann-Roch theorem to $\mathcal{D}_X$. The theorem states that for $\mathcal{X}$ a orbifold, $V$ an orbibundle, and $h$ a discrete automorphism of $\mathcal{X}$ that lifts to $V$:
$$str_h(\mathcal{X};V)=\chi^{fake}\big(\mathcal{IX}^h;\frac{tr_{\tilde{h}}(V)}{str_{\tilde{h}}N^*_{\mathcal{IX}^h|\mathcal{X}}}\big)$$ 
Here $\tilde{h}$ some lifting of $h$ on each component of $\mathcal{IX}^h$, and $\chi^{fake}(A;V)$ is defined to be $\int_{A} \ch(V)\td(TA)$, i.e. the pushforward in fake $K$-theory. This theorem is consequence of the usual Kawasaki-Riemann-Roch theorem, which was shown by Tonita in \cite{VirtKawasaki} to hold for virtually smooth orbifolds.\par
 We recall from \cite{PermIX} the following description of $\mathcal{I}X_{g,n,d}^h$:\par
The total space itself corresponds to a moduli space of stable maps from curves $\mathcal{C}$ with a symmetry $\tilde{h}$ accomplishing the permutation $h$ of marked points.\par
A connected component (henceforth referred to as a Kawasaki stratum) of this space is described by certain combinatorial data:\par
\begin{itemize}
    \item A graph $G$ dual to the quotient of the curve by the cyclic group generated by $\tilde{h}$.
    \item A positive integer $M_v$ for each vertex $v$ $M_v$ representing the order of $\tilde{h}$ on the vertex $v$. 
    \item The discrete characteristics (genus, degree) of the map on each irreducible component. 
    \item A labelling of the vertices of $G$ with eigenvalues of $\tilde{h}^r$ on the tangent lines to the branches at the ramification points of order $r$. These eigenvalues will be primitive $m$th roots of unity for $m=\frac{M_v}{r}$.
    \item A labeling of the edges of $G$ (corresponding to nodes) with pairs of eigenvalues of $\tilde{h}^r$ on each branch to the node. We require that these eigenvalues not be inverse to each other (i.e. the node is unbalanced), so the node cannot be smoothed within the stratum. 
\end{itemize}
After normalizing at the unbalanced nodes, each vertex represents a component of a Chen-Ruan moduli space of stable maps to the orbifold $X\times B\Z_M$. After doing this, the eigenvalue at a marked point or node also determines the sector of $\mathcal{I}(X\times B\Z_M)$ in which the evaluation map at that marked point lands. \par
Thus the KRR formula relates a correlator to some $fake$ $K$-theoretic correlators of $X\times B\Z_M$, which are additionally twisted by the denominator terms. These account for the twistings of fake $K$-theory that appear in the adelic space formalism. \par
Marrying the vertices at edges involves the application of a propagator operator for each edge, which coincides with the change of polarization from the standard to the uniform polarization. \par
\subsection{Twisted potentials}
The exact same argument applies essentially verbatim to twisted potentials, with two differences. The vertex potentials are further twisted by the restriction of the twisting class (we label the resulting potentials $\mathcal{D}^{tw,\mathbf{E}}_{X\times B\Z_M}$). And, only in the case of type III twistings, the edge operators are modified as well. \par

Our strategy will thus be to begin with the twisted potential $\mathcal{D}_X^{\mathbf{E}}$, where $\mathbf{E}$ denotes a twisting of type III, we pass to the adelic potential $\underline{\mathcal{D}_X^{E}}$, and analyze the vertex contributions coming from $E$ to relate $\underline{\mathcal{D}_X^{E}}$ and $\underline{\mathcal{D}_X}$. Then we use the adelic formula to convert that to a relationship between $\mathcal{D}_X^{E}$ and $\mathcal{D}_X$, which will involve comparing the respective edge operators. \par

Rather than beginning with $\mathcal{D}_X$, we could take $\mathbf{E}$ to be the composition of $\mathbf{E_0}$, a twisting of type I and II, and $\mathbf{E_1}$, a twisting of type $III$. The resulting argument would give a relationship between $\mathcal{D}_X^\mathbf{E}$ and $\mathcal{D}_X^{\mathbf{E_0}}$, and is identical to the case where $\mathbf{E_0}$ is trivial, so we just work in the latter setting to minimize notation.

\subsubsection{Vertex Contributions}
Let $\mathcal{\widehat{M}}$ be a Kawasaki stratum with ambient moduli space $X_{g,n,d}$ (from which the twisting classes are inherited). Let $\mathcal{C}$ be the universal curve, and $\widehat{\mathcal{C}}=\mathcal{C}$ be the universal quotient curve by $h$. Let $ft$, $ev$, $i$ denote the structure maps of $\mathcal{C}$ (the unitalicized such maps denote the ones coming from the ambient space $X_{g,n,d}$). Let the vertex and edge nodes of $\mathcal{C}$ be labelled $Z_v$, respectively, and label the cotangent branches by $L_\pm$. Any hatted version of the previously introduced notation refers to the corresponding construction on $\widehat{\mathcal{C}}$. \par

We have
$$\text{ft}_*\text{i}_*\text{ev}^*V_k|_{\mathcal{\widehat{M}}}=ft_*i_*\mathcal{\mathcal{O}_{Z_\mathcal{C}}}F_k(L_+,L_-)Eu(N)),$$ for $N$ some excess normal bundle bundle, and $Eu$ the $K$-theoretic Euler class.\par
$N_{Z_v}$ is trivial, since all vertex nodes can be smoothed within the stratum. This allows us to recast the nodal twisting restricted to 1-vertex strata solely in terms of the nodal loci of those strata.Let $\mathcal{\widehat{M}}$ now denote a stratum with one vertex and no edges. \par
If we denote the twisting class by $S$, the vertex potential is the cohomological potential of $X\times B\Z_M$, twisted by $\ch(tr_{\tilde{h}}(S|_\mathcal{\widehat{M}}))$, $\td(T\mathcal{\widehat{M}})$, and the denominator of the KRR formula, which contributes a class $\mathcal{O}^{tw}_{X\times B\Z_M}(T\mathcal{\widehat{M}})$. If $V$ denotes the terms coming from the inputs in a particular twisted correlator, the contribution of of $\mathcal{\widehat{M}}$ into the Kawasaki-Riemann-Roch formula applied to that correlator is: $$\chi^{fake}(\mathcal{\widehat{M}};\mathcal{O}^{vir}_{\mathcal{\widehat{M}}}\cdot tr_{\tilde{h}}(S\cdot V)\cdot \mathcal{O}^{tw}_{X\times B\Z_M}(T\mathcal{\widehat{M}})).$$ \par
We will henceforth isolate the contribution of the locus of nodes with $r$ copies on the covering curve, which we refer to as $Z_r$.\par
Differentiating the twisting class in $E_k$ brings down the factor $$\ch(\Delta_k^r)=\ch(tr_{\tilde{h}}\frac{\Psi^k}{k}(ft_*i_*ev^*F_k(L_+,L_-))).$$ \par
Since taking the (genuine) $K$-theoretic pushforward from the quotient $\widehat{Z}_r=Z_r//\Z_M$ extracts $\Z_M$-invariants, we can rewrite this expression as $\ch(\sum_{\lambda^M=1} \lambda^k\frac{\Psi^k}{k}(\widehat{ft}_*\widehat{i}_*\widehat{ev}^*F_k(L_+L_-)\otimes \C_{\lambda^{-1}})$ \par
To simplify the expression, we make the following calculation:
\begin{lem}
$\ch(\widehat{ft}_*\widehat{i}_*\widehat{ev}^*\alpha L_+^aL_-^b\otimes \C_{\lambda^{-1}}))=\begin{cases}0 &\lambda^r\neq \zeta^{a-b}\\ \widehat{ft}_*\widehat{i}_*(\ch(\widehat{ev}^*\alpha L_+^aL_-^b)\td(L_+L_-^*))&\lambda^r=\zeta^{b-a}\end{cases}$
\end{lem}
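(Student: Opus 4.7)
The strategy is to combine an eigenvalue analysis for the $\tilde h$-action on the line bundle $L_+^a L_-^b$ with a character-valued Grothendieck--Riemann--Roch computation along the nodal stratum $\widehat Z_r$. At the heart of the argument is the fact that the pushforward $\widehat{ft}_*\widehat i_*(\,\cdot\,\otimes\C_{\lambda^{-1}})$ on the quotient curve amounts to extracting the $\lambda$-isotypic component of the $\Z_M$-action on the sheaf being pushed down.

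For the vanishing case I would use that vertex nodes are balanced: at any node lying in $Z_r$ the eigenvalues of $\tilde h^r$ on the two cotangent lines $L_+,L_-$ are reciprocal, so if the node carries the label $\zeta$ then $\tilde h^r$ acts on $L_+^a L_-^b$ by $\zeta^{a-b}$. Twisting by $\C_{\lambda^{-1}}$ modifies the character of $\tilde h$ by $\lambda^{-1}$, hence $\tilde h^r$ acts on the twisted bundle by $\zeta^{a-b}\lambda^{-r}$. Since only the $\Z_M$-invariant piece survives the pushforward, the result vanishes identically unless $\zeta^{a-b}\lambda^{-r}=1$, which is the condition $\lambda^r=\zeta^{a-b}$.

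For the non-vanishing case I would analyze the normal-bundle structure of the nodal locus inside the Kawasaki stratum. The two-dimensional normal bundle of $Z_r$ inside the universal curve splits under $\tilde h^r$ into the smoothing direction $L_+\otimes L_-$, whose weight is $\zeta\cdot\zeta^{-1}=1$, and a complementary direction of weight $\zeta^2$, which is the line bundle $L_+\otimes L_-^*$. The invariant factor is tangential to the stratum and contributes nothing to the GRR correction; only the non-invariant rank-one summand behaves as the effective normal bundle and produces the Todd factor $\td(L_+L_-^*)$. Composing the resulting normal-bundle correction with $\widehat{ft}_*$ leaves the Chern character $\ch(\widehat{ev}^*\alpha L_+^aL_-^b)$ intact inside the pushforward and yields the claimed formula.

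The main obstacle will be careful sign and convention bookkeeping: verifying that the effective normal line bundle is $L_+L_-^*$ rather than its dual, that the quotient map $Z_r\to\widehat Z_r$ contributes no extra multiplicity factors beyond those absorbed into the definition of the fake pushforward, and that the characters of $\tilde h$ versus $\tilde h^r$ are correctly matched when descending to the quotient. Once these conventions are pinned down, the identity reduces to the standard character-valued GRR applied to a codimension-one embedding with prescribed $\tilde h^r$-weight on its normal line.
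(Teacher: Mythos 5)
Your first half is sound and matches the paper's argument: the paper applies Toen's Grothendieck--Riemann--Roch, writes the inertia stack of $\widehat{Z}_r$ over the stratum as $m$ copies of the node labelled by powers of $\tilde{h}^r$, and obtains the character sum $\sum_{s=1}^{m}\lambda^{-rs}\zeta^{(b-a)s}$, which vanishes unless $\lambda^{r}=\zeta^{b-a}$ and otherwise equals $m$, cancelling the $1/m$ from the automorphism group in the orbifold Chern character. Your phrasing via ``the pushforward to the quotient extracts the invariant isotypic piece'' is the same mechanism. (Be aware that the lemma as printed is already inconsistent about $\zeta^{a-b}$ versus $\zeta^{b-a}$, so your sign convention for the action of $\tilde h^r$ on $L_\pm$ needs to be fixed once and used consistently, but this is bookkeeping, not a gap.)

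The non-vanishing case, however, contains a genuine error in where the Todd factor comes from. There is no rank-two normal bundle of $Z_r$ splitting as $L_+\otimes L_-$ plus $L_+\otimes L_-^*$: the normal directions to the node inside the universal curve are the two branches $L_+^*$ and $L_-^*$, and the relevant map for the lemma is the composite $\widehat{ft}\circ\widehat{i}$, which embeds $\widehat{Z}_r$ as a divisor in the stratum with normal line bundle the smoothing direction $L_+^*\otimes L_-^*$. That line is exactly the $\tilde h^r$-\emph{invariant} one (weight $\zeta\cdot\zeta^{-1}=1$), and it is this invariant normal line whose relative Todd class produces the factor written as $\td(L_+L_-)$ in the paper's proof (the $\td(L_+L_-^*)$ in the statement should be read as $\td((L_+L_-)^*)$, not as a second, non-invariant summand). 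Because it is invariant, its Todd contribution is the same on every inertia component and factors out of the character sum, which is precisely why the answer is (character sum)$\times$(ordinary Todd of the smoothing line). Your claim that the invariant factor ``contributes nothing'' while a non-invariant weight-$\zeta^2$ line produces the Todd class inverts this logic; in Kawasaki/Lefschetz-type formulas non-invariant normal directions contribute trace-twisted denominators of the form $\str_h\wedge^\bullet N^*$, not Todd classes, so your version of the computation would not reproduce the stated right-hand side.
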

\begin{proof}
We apply Toen's Grothendieck-Riemann-Roch theorem. The preimage of $\mathcal{\widehat{M}}$ in the inertia stack of $Z_r$ is $m$ copies of the node, labelled by elements of the automorphism group of the node, labelled by powers of $\tilde{h}^r$, which acts on $L_\pm$ by $\zeta^{\mp 1}$. Since $L_+L_-$ is invariant under the $\tilde{h}^r$-action, the Todd class is invariant under $\tilde{h}^r$.\par 
So the pushforward is equal to $\widehat{ft}_*\widehat{i}_*\ch(\sum_s \lambda^{-rs}\zeta^{(b-a)s}\widehat{ev}^*\alpha L_+^aL_-^b)\td(L_+L_-)$.\par 
Since $\sum_{s=1}^m \lambda^{-rs}\zeta^{(b-a)s}=0$ unless $\lambda^{-r}\zeta^{b-a}=1$, so $\lambda^{r}=\zeta^{b-a}$, in which case the result is: $$\widehat{ft}_*\widehat{i}_*\ch(\widehat{ev}^*\alpha L_+^aL_-^b)\td(L_+L_-)$$\par
The factor $m$ from the $m$ copies is cancelled by the factor $\frac{1}{m}$ in the construction of the Chern character for orbifolds due to the size of the automorphism group at the node. 
\end{proof}
Since the Chern character intertwines Adams operations and cohomological power operations (denoted here $P^k$), the contribution of the term $\widehat{ev}^*\alpha L_+^aL_-^b$ of $\widehat{ev}^*F_k$ to $\ch(\Delta_k^r)$ can be described as the following cohomological pushforward:\par
\[\sum_{\lambda^{M}=1,\lambda^{r}=\zeta^{b-a}}\lambda^k\frac{P^k}{k}\big((\widehat{ft}\circ \widehat{i})_*\ch(\widehat{ev}^*aL_+^aL_-^b\otimes \C_{\lambda^{-1}})\td(L_+L_-)\big)\]\par

If $\lambda^{r}=\zeta^{b-a}$ and $\lambda$ is an $M$th root of unity, we necessarily have that $r|M$. So we can relabel $k$ as $rl_0$. Collecting the $r$ terms corresponding to the eigenvalues with $\lambda^{r}=\zeta^{b-a}$ terms yields that the above expression is equal to:
\[\zeta^{l_0(b-a)}\frac{P^{rl_0}}{l_0}(\widehat{ft}_*\widehat{i}_*\ch(\widehat{ev}^*\alpha L_+^aL_-^b)\td(L_+L_-)\]\par
Since orbifold Gromov-Witten theory uses the cotangent lines $\widehat{L}_\pm$ on the quotient curve, we rewrite $L_\pm$ as $\widehat{L}_\pm^{\frac{1}{m}}$, which is valid in fake $K$-theory even though such a bundle may not exist genuinely. Pulling back $\Delta_{rl_0}^r$ to $\widehat{Z}_r$, renaming $\frac{\Psi^{l_0}}{l_0}(F_k)$ to $S_k$, and reverting to the notation of fake $K$-theory yields:  \par
$$\widehat{i}^*\widehat{ft}^*\Delta_{rl_0}^{r}=\Psi^r\big(\widehat{ev}^*S_k(\zeta^{-1}\widehat{L}_+^{\frac{1}{m}},\zeta \widehat{L}_-^{\frac{1}{m}})(1-\widehat{L}_+^{\frac{l_0}{m}}\widehat{L}_-^{\frac{l_0}{m}})\big)$$\par
The factor $(1-\widehat{L}_+^{1/m}\widehat{L}_-^{1/m})$ occurs from pulling back $\widehat{i}_*\widehat{ft}_*\mathcal{O}_{\widehat{Z}_r}$, and is the $K$-theoretic Euler class of the normal bundle of $\widehat{Z}_r$ in $\mathcal{\widehat{M}}$.  \par
To compute the correlator as an integral on $\widehat{Z}_r$, we use the general formula for a morphism $Y\to X$: $$\chi^{fake}(X;V)=\chi^{fake}(Y;\frac{f^*(V)}{Eu(N_f)}).$$\par 
If we label the twisting class, contributions from the KRR denominators, and correlator inputs together as $B$, we thus have:
$$\chi^{fake}(\mathcal{\widehat{M}}; \Delta_k^r\cdot B\cdot \mathcal{O}^{tw}_{X\times B\Z_M})=$$\\
$$\chi^{fake}\big(\widehat{Z}_r;\frac{\Psi^r(S_k(\zeta^{-1}\widehat{L}_+^{1/m},\zeta \widehat{L}_-^{1/m})(1-\widehat{L}_+^{l_0/m}\widehat{L}_-^{l_0/m}))\cdot \widehat{i}^*\widehat{ft}^*(B\cdot \mathcal{O}^{tw}_{X\times B\Z_M})}{1-\widehat{L}_+^{1/m}\widehat{L}_-^{1/m}}\big).$$\par

Ungluing the nodes and integrating over the moduli spaces of component curves yields an order-2 recurrence relation on the correllators, in which the tensor $\frac{S_k(\zeta^{-1}L_+,\zeta L_-)(1-L_+^{l_0}L_-^{l_0})}{1-L_+L_-}$ is split among the points that were unglued and inserted in the corresponding seats. However, since we need the virtual structure sheaves of the components to match $\mathcal{O}^{tw}$ (i.e. also include the KRR denominators), we must also replace the denominator $1-L_+L_-$ with $\Psi^r(1-L_+L_-)$. This follows from the explicit calculation of $\mathcal{O}^{tw}$ in \cite{PermIX}, and accounts for the fact that deformations of a node on the quotient curve correspond to coherent deformations of the $r$ preimages on the covering curve, whereas in general they can be deformed independently. \par
A more detailed account of how ungluing the nodes interacts with cohomological nodal twisting classes is given in \cite{TwistedOrb} (see Proposition 3.9) for the case where $F_k$ are constants, the addition of nonconstant terms does not alter the argument. \par
The differential operator determined from this recurrence adds a factor of $\hbar^r/2$, due to the symmetry between $L_+$ and $L_-$, and the genus reduction (one node on the quotient curve corresponds to $r$ nodes on the covering curve).  \par
So the potential $\mathcal{D}^{tw,\mathbf{E}}_{X\times B\Z_M}$ satisfies the same differential equation as $\underline{\nabla}_r \mathcal{D}^{tw}_{X\times B\Z_M}$, where $\underline{\nabla}_r$ corresponds to changing the polarization in the sectors of order $r$ and eigenvalue $\zeta$ using the expression $\Psi^r\big(e^{\sum \frac{\Psi^{l}}{l}F_{rl}(\zeta^{-1}\widehat{L}_+^{\frac{1}{m}},\zeta\widehat{L}_-^{\frac{1}{m}})})$. 

\subsubsection{Edge contributions}
Recall that an edge in the graph of a Kawasaki stratum corresponds to an unbalanced node in the quotient curve corresponding to $r$ nodes on the cover curve where $\tilde{h}^r$ acts on the tangent branches with eigenvalues $\nu_+,\nu_-$, which are respectively primitive $m_+$,$m_-$, roots of unity, let $M$ be the order of $h$ on the stratum, and let $m=\frac{M}{r}$. \par
Fixing a particular edge $e_0$, we perform the same procedure as the vertices to compute the contribution of the nodal locus $Z_{e_0}$. The Euler factor $Eu(N)$ in the previous section becomes $1-L_+L_-$, since smoothing the edge node is normal to $\mathcal{\widehat{M}}$. \par
Differentiating in $E_k$ as before brings out the term $$\ch(\Delta_k^{e_0})=\ch(tr_h\frac{\Psi^k}{k}(ft_*i_*ev^*F_k(L_+L_-)(1-L_+L_-))=\ch(\sum_{\lambda^M=1}\lambda^k(\frac{\Psi^k}{k}(\widehat{ft}_*\widehat{i}_*F_k(L_+,L_-)(1-L_+L_-)).$$\par
The map $\widehat{ft}\circ\widehat{i}$ is an isomorphism on coarse spaces, since every point in $\mathcal{\widehat{M}}$ has a node corresponding to the edge. At the level of stacks, the automorphism group of the node is contracted to the identity, thus the (genuine) $K$-theoretic pushforward only extracts $h^r$ invariants. The term $\widehat{ev}^*L_+^iL_-^j\otimes \C_{\lambda^{-1}}$ only has a nonzero contribution when $\lambda^r=\mu^{-i}\nu^{-j}$. \par
Thus if $k=rl_0$, then $$\widehat{i}^*\widehat{ft}^*\Delta_k^e=\Psi^{r}(\widehat{ev}^*S^k(L_+\mu^{-1},L_-\nu^{-1})(1-\mu^{-1}\nu^{-1}L_+L_-).$$\par
This means that ungluing the edge nodes is done by applying the operator:
$e^{\sum_{edges}r\Psi^{r}(\hbar/2\underline{\nabla}_{\mu,\nu})}$, where $$\underline{\nabla}_{\mu,\nu}=\frac{e^{\sum_{l} \Psi^{l}{l}(F_{rl}(\widehat{L}_+^{\frac{1}{m_+}}\mu^{-1},
\widehat{L}_-^{\frac{1}{m_-}}\nu^{-1})}(\phi^\alpha\otimes\phi_\alpha)}{1-\mu^{-1}\nu^{-1}\widehat{L}_+^{\frac{1}{m_+}}
\widehat{L}_-^{\frac{1}{m_-}}}.$$
The other ingredients here are the same as the ones calculated in \cite{PermIX}: The denominator is the contribution of the normal bundle of $\mathcal{\widehat{M}}$ in the denominator of Kawasaki-Riemann-Roch formula, $\phi^\alpha,\phi_\alpha$ constitute a Poincare-dual basis of $K^0(X)$, which unglues the diagonal constraint at the nodes. \par
The resulting change of polarization on the adelic map pulls back to the one described in the theorem statement. 
\section*{Acknowledgements}
The author thanks Alexander Givental for suggesting this problem, and for his patience and guidance. This material is based upon work supported by the National Science Foundation Graduate Research Fellowship under Grant No. DGE1752814.

\printbibliography

\typeout{get arXiv to do 4 passes: Label(s) may have changed. Rerun}
\end{document}